\DeclareMathOperator{\Img}{Im}
\DeclareMathOperator{\Log}{Log}
\DeclareMathOperator{\ef}{e}
\newcommand{\dly}{\tau}
\newcommand{\cf}{p}
\newcommand{\ftrm}{q}
\newcommand{\initfun}{\varphi}
\newcommand{\prinfun}{\mathcal{X}}
\newcommand{\initpa}{\alpha}
\newcommand{\initpb}{\beta}
\newcommand{\initpc}{\gamma}
\newcommand{\initpd}{\zeta}
\newcommand{\seg}{\mathcal{I}}
\newcommand{\coseg}{\mathcal{J}}
\newcommand{\C}{\mathbb{C}}
\newcommand{\N}{\mathbb{N}}
\newcommand{\R}{\mathbb{R}}
\newcommand{\T}{\mathbb{T}}
\newcommand{\Z}{\mathbb{Z}}
\newcommand{\X}{\mathbb{X}}
\newcommand{\crd}{\mathrm{C}_{\mathrm{rd}}}
\newcommand{\reg}{\mathcal{R}}
\newtheorem{theorem}{Theorem}[section]
\newtheorem{definition}{Definition}[section]
\newtheorem{example}{Example}[section]
\newtheorem{remark}{Remark}[section]
\begin{document}
\title{Existence and uniqueness of solutions to systems of delay dynamic equations on time scales}
\author{Ba\c{s}ak KARPUZ\thanks{\textbf{Address}: Afyon Kocatepe University, Department of Mathematics, Faculty of Science and Arts, ANS Campus, 03200 Afyonkarahisar, Turkey.\newline\textbf{Email}: \texttt{bkarpuz@gmail.com}\newline\textbf{Web}: \url{http://www2.aku.edu.tr/\string~bkarpuz} }}
\date{\small{\textbf{2000 AMS Subject Classification}: Primary: 39A10, Secondary: 34C10.}\\
{\small{\textbf{Keywords and Phrases}: Delay dynamic equations, existence and uniqueness, Picard-Lindel\"{o}f theorem, variation of parameters formula.}}}

\maketitle

\begin{abstract}
The purpose of this paper is to establish Picard-Lindel\"{o}f theorem for local uniqueness and existence results for first-order systems of nonlinear delay dynamic equations.
In the linear case, we extend our results to global existence and uniqueness of solutions on the entire interval (allowed to be unbounded above), and prove the variation of parameters formula for the unique solution in terms of the principal solution.
A simple example concerning the ordinary case is also provided.
\end{abstract}

\section{Introduction}\label{intro}

In \cite{hi1988}, the theory of time scale calculus is introduced by S.~Hilger in order to unify the discrete and the continuous analysis.
This theory allows the researchers to study dynamic equations, which include both difference and differential equations.
In the past decade, many papers appeared focusing on the investigation of the asymptotic and oscillatory properties of solutions of delay dynamic equations of the form
\begin{equation}
\begin{cases}
x^{\Delta}(t)=\displaystyle\sum_{i\in[1,n]_{\N}}\cf_{i}(t)x(\dly_{i}(t))+\ftrm(t)\quad\text{for}\ t\in[\initpb,\infty)_{\T}\\
x(t)=\initfun(t)\quad\text{for}\ t\in[\initpa,\initpb]_{\T},
\end{cases}\label{introeq1}
\end{equation}
where $n\in\N$, $\T$ is a time scale with $\sup\T=\infty$, $\initpa,\initpb\in\T$ with $\initpa\leq\min_{i\in[1,n]_{\N}}\inf_{t\in[\initpb,\infty)_{\T}}\{\dly_{i}(t)\}$, $\cf_{i},\ftrm\in\crd([\initpb,\infty)_{\T},\R)$, $\dly_{i}\in\crd([\initpb,\infty)_{\T},\T)$ satisfies $\lim_{t\to\infty}\dly_{i}(t)=\infty$ and $\dly_{i}(t)\leq t$ for all $t\in[\initpb,\infty)_{\T}$ and all $i\in[1,n]_{\N}$, and $\initfun\in\crd([\initpa,\initpb]_{\T},\R)$, for instance, see the papers \cite{ag2008,bo/pe2005,bo/ka/oc2008,sa/st2006,zh/de2002}.
All the mentioned papers assume existence of solutions to \eqref{introeq1}, however we have not met any result guaranteeing the existence and/or uniqueness of solutions for this type of equations in the literature yet.
In this paper, to fill the mentioned gap, we will prove Picard-Lindel\"{o}f existence and uniqueness theorem for the following nonlinear delay dynamic equation
\begin{equation}
\begin{cases}
x^{\Delta}(t)=f\big(t,x(\dly_{1}(t)),x(\dly_{2}(t)),\ldots,x(\dly_{n}(t))\big)\quad\text{for}\ t\in[\initpb,\initpc]^{\kappa}_{\T}\\
x(t)=\initfun(t)\quad\text{for}\ t\in[\initpa,\initpb]_{\T},
\end{cases}\label{introeq2}
\end{equation}
where $n\in\N$, $\T$ is a time scale, $\X$ is a Banach space, $\initpa,\initpb,\initpc\in\T$ with $\initpc\geq\initpb$ and $\initpa\leq\min_{i\in[1,n]_{\N}}\inf_{t\in[\initpb,\initpc]^{\kappa}_{\T}}\{\dly_{i}(t)\}$, $f\in\crd([\initpb,\initpc]^{\kappa}_{\T}\times\X^{n},\X)$, $\dly_{i}\in\crd([\initpb,\initpc]^{\kappa}_{\T},\T)$ satisfies $\dly_{i}(t)\leq t$ for $t\in[\initpb,\initpc]_{\T}$ and all $i\in[1,n]_{\N}$, and $\initfun\in\crd([\initpa,\initpb]_{\T},\X)$.
Then, we extend our results to prove existence and uniqueness for global solutions of the linear initial value problem \eqref{introeq1}.
It is clear that letting $\tau_{i}(t)=t$ for all $t\in[\initpb,\initpc]_{\T}$ and all $i\in[1,n]_{\N}$, \eqref{introeq2} reduces to the ordinary dynamic equation
\begin{equation}
\begin{cases}
x^{\Delta}(t)=f\big(t,x(t)\big)\quad\text{for}\ t\in[\initpb,\initpc]^{\kappa}_{\T}\\
x(\initpb)=x_{0},
\end{cases}\label{introeq3}
\end{equation}
where $x_{0}\in\X$.
Some existence and uniqueness results for \eqref{introeq3} can be found in \cite[\S~8.2]{bo/pe2001}, \cite[\S~2]{la/si/ka1996} and in the papers \cite{di/ru/sa2009,ka1993}.
In the continuous case ($\T=\R$), the readers may find Picard-Lindel\"{o}f theorem for \eqref{introeq3} in \cite[Theorem~8.13]{pe/ke2003}.
It should be also mentioned that our results (see Corollary~\ref{seurcrl1} and Remark~\ref{seurrmk3} below) particularly salvages the one given in \cite[Theorem~1.1.1]{gy/la1991} for delay differential equations.

For the completeness in the paper, we find useful to remind some
basic concepts of the time scale theory as follows.
A \emph{time scale}, which inherits the standard
topology on $\R$, is a nonempty closed subset of reals.
Throughout the paper, the notation $\T$ will be used to denote time scales.
On a time scale $\T$, the \emph{forward jump operator},
the \emph{backward jump operator} and the \emph{graininess function}
are respectively defined by
\begin{equation}
\sigma(t):=\inf(t,\infty)_{\T},\ \rho(t):=\sup(-\infty,t)_{\T}
\quad\text{and}\quad \mu(t):=\sigma(t)-t\notag
\end{equation}
for $t\in\T$, where the intervals with the subscript $\T$ denote the intersection of the usual interval with the time scale.
A point $t\in\T$ is called \emph{right-dense} if $t<\sup\T$ and $\sigma(t)=t$, while is called \emph{left-dense} if $t>\inf\T$ and $\rho(t)=t$.
Also, a point $t\in\T$ is called \emph{right-scattered} if $\sigma(t)>t$, and is called \emph{left-scattered} if $\rho(t)<t$.
The \emph{Hilger derivative} (in short derivative) of a function $f:\T\to\R$ is defined by
\begin{equation}
f^{\Delta}(t):=
\begin{cases}
\dfrac{f\big(\sigma(t)\big)-f(t)}{\mu(t)},&\mu(t)>0\\
\lim\limits_{\substack{s\in\T\\ s\to t}}\dfrac{f(t)-f(s)}{\sigma(t)-s},&\mu(t)=0
\end{cases}\notag
\end{equation}
for $t\in\T^{\kappa}$ (provided that limit exists), and $\T^{\kappa}:=\T\backslash\{\sup\T\}$ if $\sup\T=\max\T$ and satisfies $\rho(\max\T)\neq\max\T$; otherwise, $\T^{\kappa}:=\T$.
A function $f$ is called \emph{rd-continuous} provided
that it is continuous at right-dense points in $\T$,
and has finite limit at left-dense points, and
the \emph{set of rd-continuous} are denoted by $\crd(\T,\R)$, its subset $\crd^{1}(\T,\R)$
involves functions whose derivative is also in $\crd(\T,\R)$.
For a function $f\in\crd(\T,\R)$, the \emph{Cauchy integral} is defined by
\begin{equation}
\int_{s}^{t}f(\eta)\Delta\eta=F(t)-F(s)\quad\text{for}\ s,t\in\T,\notag
\end{equation}
where $F\in\crd^{1}(\T,\R)$ is an anti-derivative of the function $f$ on $\T$.
A function $f\in\crd(\T,\R)$ is called \emph{regressive} if
$1+\mu(t)f(t)\neq0$ for all $t\in\T$,
and $f\in\crd(\T,\R)$ is called \emph{positively regressive}
if $1+\mu(t)f(t)>0$ for all $t\in\T$.
The \emph{set of regressive functions} and the \emph{set
of positively regressive functions}
are defined by $\reg(\T,\R)$ and $\reg^{+}(\T,\R)$, respectively,
and the \emph{set of negatively regressive functions} $\reg^{-}(\T,\R)$ is
defined similarly.
Let $f\in\reg(\T,\R)$,
then the \emph{generalized exponential function}
$\ef_{f}(\cdot,s)$ on a time scale $\T$ is defined
to be the unique solution of the initial value problem
\begin{equation}
\begin{cases}
x^{\Delta}(t)=f(t)x(t)\quad\text{for}\ t\in\T^{\kappa}\\
x(s)=1
\end{cases}\notag
\end{equation}
for some fixed $s\in\T$.
For $h>0$, set $\C_{h}:=\{z\in\C:\ z\neq-1/h\}$,
$\Z_{h}:=\{z\in\C:\ -\pi/h<\Img(z)\leq\pi/h\}$, and $\C_{0}:=\Z_{0}:=\C$.
For $h\geq0$, define the \emph{cylinder transformation} $\xi_{h}:\C_{h}\to\Z_{h}$ by
\begin{equation}
\xi_{h}(z):=
\begin{cases}
z,&h=0\\
\displaystyle\frac{1}{h}\Log(1+hz),&h>0
\end{cases}\notag
\end{equation}
for $z\in\C_{h}$ with $1+hz\neq0$.
If $f\in\reg([s,t]_{\T},\R)$, then the exponential
function can also be written in the form
\begin{equation}
\ef_{f}(t,s):=\exp\bigg\{\int_{s}^{t}\xi_{\mu(\eta)}(f(\eta))\Delta\eta\bigg\}\quad\text{for}\ s,t\in\T.\notag
\end{equation}
It is known that the exponential function
$\ef_{f}(\cdot,s)$ is strictly positive on $[s,\infty)_{\T}$
provided that $f\in\reg^{+}([s,\infty)_{\T},\R)$, while
$\ef_{f}(\cdot,s)$ alternates in sign at right-scattered
points of the interval $[s,\infty)_{\T}$ provided that
$f\in\reg^{-}([s,\infty)_{\T},\R)$. 
The readers are referred to the books
\cite{bo/pe2001,la/si/ka1996} for further interesting details in the
time scale theory.

The paper is organized as follows:
In \S~\ref{seur}, we state and prove Picard-Lindel\"{o}f local existence and uniqueness of theorem to first-order nonlinear delay dynamic equation of the form \eqref{introeq2};
in \S~\ref{mole}, we focus our attention to linear equations in order to prove global existence and uniqueness of solutions together with the solution representation formula.
At the end of the paper, we provide a simple example.

\section{Picard-Lindel\"{o}f theorem for nonlinear delay dynamic equations}\label{seur}

In this section, we prove Picard-Lindel\"{o}f theorem for the solutions of the
nonlinear initial value problem \eqref{introeq2}.
Just as in the continuous and the discrete case the solution of the initial value problem in \eqref{introeq2} is equivalent to that of the corresponding integral equation
\begin{equation}
x(t)=
\begin{cases}
\initfun(\initpb)+\displaystyle\int_{\initpb}^{t}f\big(\eta,x(\dly_{1}(\eta)),x(\dly_{2}(\eta)),\ldots,x(\dly_{n}(\eta))\big)\Delta\eta\quad\text{for}\ t\in[\initpb,\initpc]_{\T}\\
\initfun(t)\quad\text{for}\ t\in[\initpa,\initpb]_{\T}.
\end{cases}\label{seureq2}
\end{equation}
Actually, we will prove the existence and uniqueness of solutions to the delay $\Delta$-integral equation \eqref{seureq2}.
The key idea of the proof depends on constructing a sequence of so-called successive Picard approximations, which uniformly converges to the unique solution of \eqref{seureq2}.
For $x_{0}\in\X$ and $\varepsilon\in\R^{+}$, we introduce the notation $B(x_{0},\varepsilon):=\{x\in\X:\ \|x-x_{0}\|\leq\varepsilon\}$ and define $I(\initfun,\varepsilon):=\bigcup_{t\in[\initpa,\initpb]_{\T}}B(\initfun(t),\varepsilon)$.
The main result of the paper reads as follows.

\begin{theorem}[Picard-Lindel\"{o}f theorem]\label{seurthm1}
Assume that for some $\varepsilon\in\R^{+}$, $f\in\crd([\initpb,\initpc]_{\T}^{\kappa}\times I^{n}(\varphi,\varepsilon),\X)$, and that for some $M\in\R^{+}$, $\|f(t,u_{1},u_{2},\ldots,u_{n})\|\leq M$ for all $t\in[\initpb,\initpc]_{\T}^{\kappa}$ and all $u_{1},u_{2},\ldots,u_{n}\in I(\varphi,\varepsilon)$, and that for some $L\in\R^{+}$, $f$ satisfies the Lipschitz condition
\begin{equation}
\big\|f(t,u_{1},u_{2},\ldots,u_{n})-f(t,v_{1},v_{2},\ldots,v_{n})\big\|\leq L\sum_{i\in[1,n]_{\N}}\big\|u_{i}-v_{i}\big\|\label{seurthm1eq1}
\end{equation}
for all $t\in[\initpb,\initpc]_{\T}^{\kappa}$ and all $u_{1},u_{2},\ldots,u_{n},v_{1},v_{2},\ldots,v_{n}\in I(\varphi,\varepsilon)$.
Then, the initial value problem \eqref{introeq2} has a unique solution $x$ on the interval $[\initpa,\sigma(\initpd)]_{\T}\subset[\initpa,\initpc]_{\T}$, where  $\initpd:=\max[\initpb,\initpb+\delta]_{\T}$ and $\delta:=\min\{\initpc-\initpb,\varepsilon/M\}$.
\end{theorem}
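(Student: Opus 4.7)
The plan is to reduce the initial value problem \eqref{introeq2} to the $\Delta$-integral equation \eqref{seureq2}---which are equivalent by the fundamental theorem of $\Delta$-calculus---and then apply a Picard-style fixed-point argument. Let $S$ denote the set of rd-continuous functions $x:[\initpa,\sigma(\initpd)]_{\T}\to\X$ satisfying $x\equiv\initfun$ on $[\initpa,\initpb]_{\T}$ and $x(t)\in B(\initfun(\initpb),\varepsilon)\subseteq I(\initfun,\varepsilon)$ for every $t\in[\initpb,\initpd]_{\T}$; with the sup norm this is a closed (hence complete) subset of the Banach space $\crd([\initpa,\sigma(\initpd)]_{\T},\X)$. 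Define the Picard operator $T$ on $S$ by $(Tx)(t):=\initfun(t)$ for $t\in[\initpa,\initpb]_{\T}$ and by the right-hand side of \eqref{seureq2} for $t\in[\initpb,\sigma(\initpd)]_{\T}$.

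First I would verify that $T(S)\subseteq S$. For $x\in S$ and $\eta\in[\initpb,\initpd]_{\T}^{\kappa}$, the delays satisfy $\dly_{i}(\eta)\leq\initpd$, so $x(\dly_{i}(\eta))\in I(\initfun,\varepsilon)$ and the integrand is rd-continuous in $\eta$. The bound $\|f\|\leq M$ combined with $\delta\leq\varepsilon/M$ then yields $\|(Tx)(t)-\initfun(\initpb)\|\leq M(t-\initpb)\leq M\delta\leq\varepsilon$ for every $t\in[\initpb,\initpd]_{\T}$, placing $(Tx)(t)$ in $B(\initfun(\initpb),\varepsilon)$ there. Importantly, this constraint need only be enforced on $[\initpb,\initpd]_{\T}$, not at the extension point $\sigma(\initpd)$, because no integrand ever evaluates $x$ past $\initpd$.

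To obtain a contraction without any restriction on the length $\sigma(\initpd)-\initpb$, I would equip $S$ with a Bielecki-type weighted norm $\|x\|_{*}:=\sup_{t\in[\initpa,\sigma(\initpd)]_{\T}}\|x(t)\|/\ef_{K}(t,\initpb)$, where $K>nL$ is a positive (hence positively regressive) constant; since $\ef_{K}(\cdot,\initpb)$ is strictly positive and nondecreasing on $[\initpb,\sigma(\initpd)]_{\T}$, $\|\cdot\|_{*}$ is equivalent to the sup norm and $S$ remains complete under it. Using the Lipschitz bound \eqref{seurthm1eq1}, the inequality $\dly_{i}(\eta)\leq\eta$, the monotonicity of $\ef_{K}(\cdot,\initpb)$, and the identity $\int_{\initpb}^{t}\ef_{K}(\eta,\initpb)\Delta\eta=(\ef_{K}(t,\initpb)-1)/K$, a short estimate gives $\|Tx-Ty\|_{*}\leq(nL/K)\|x-y\|_{*}$, with contraction constant $nL/K<1$. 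The Banach fixed point theorem then produces a unique fixed point of $T$ in $S$, which is the desired unique solution of \eqref{introeq2} on $[\initpa,\sigma(\initpd)]_{\T}$.

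The main obstacle is the careful book-keeping around the right endpoint $\sigma(\initpd)$: when $\initpd$ is right-scattered, $\sigma(\initpd)-\initpb$ can exceed $\delta$, so the estimate $M(t-\initpb)\leq\varepsilon$ fails at $t=\sigma(\initpd)$ and the approximations must be allowed to leave $I(\initfun,\varepsilon)$ at that single extension point. An alternative to the Bielecki norm is to iterate $T$ and prove inductively that $\|T^{k}x-T^{k}y\|_{\infty}\leq(nL)^{k}h_{k}(\sigma(\initpd),\initpb)\|x-y\|_{\infty}$ in terms of the generalized $\Delta$-monomials $h_{k}$; either approach yields a contractive iterate and hence the same conclusion.
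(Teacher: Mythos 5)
Your proposal is correct, but it takes a genuinely different route from the paper. The paper runs the classical successive-approximations argument in explicit form: it defines the Picard iterates $x_{k+1}=\mathcal{T}x_{k}$ on $[\initpa,\initpd]_{\T}$, proves by induction the bound $\|x_{k}-x_{k-1}\|\leq ML^{k-1}h_{k}(\cdot,\initpb)$ in terms of the generalized monomials, sums the majorant series to $\frac{M}{L}(\ef_{L}(\initpd,\initpb)-1)$ via the Weierstrass $M$-test to get uniform convergence, passes to the limit in the integral, and then establishes uniqueness by a \emph{separate} Gr\"{o}nwall argument; finally it extends from $\initpd$ to $\sigma(\initpd)$ by an explicit one-step formula when $\initpd$ is right-scattered. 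You instead set up a single application of the Banach fixed point theorem on the complete set $S$ of functions on all of $[\initpa,\sigma(\initpd)]_{\T}$, using a Bielecki weight $\ef_{K}(\cdot,\initpb)$ with $K>nL$; the estimate $\int_{\initpb}^{t}\ef_{K}(\eta,\initpb)\Delta\eta=(\ef_{K}(t,\initpb)-1)/K\leq\ef_{K}(t,\initpb)/K$ together with $\dly_{i}(\eta)\leq\eta$ and the monotonicity of $\ef_{K}(\cdot,\initpb)$ does give the contraction constant $nL/K<1$, and your observation that the ball constraint need only be imposed on $[\initpb,\initpd]_{\T}$ while the contraction estimate still covers $t=\sigma(\initpd)$ correctly absorbs the paper's separate extension step. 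What your approach buys is a shorter, unified existence-and-uniqueness argument (uniqueness in $S$ is genuine uniqueness, since any solution with values in $I(\initfun,\varepsilon)$ satisfies $\|x(t)-\initfun(\initpb)\|\leq M(t-\initpb)\leq\varepsilon$ on $[\initpb,\initpd]_{\T}$ and hence lies in $S$; it would be worth saying this explicitly). It also gets the constant right: the paper's induction step silently drops the factor $n$ coming from the sum over $i$ (the correct bound is $M(nL)^{k}h_{k+1}$, which of course still converges), whereas your $nL/K$ accounts for it. What the paper's approach buys is constructiveness---the iterates and the explicit rate $ML^{k-1}h_{k}(t,\initpb)$---and independence from completeness-based fixed point machinery. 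Your closing remark that one could instead show $\mathcal{T}^{k}$ is eventually contractive using $(nL)^{k}h_{k}(\sigma(\initpd),\initpb)\to0$ is essentially the paper's estimate recast in fixed-point language, confirming the two routes are two packagings of the same quantitative core.
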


\begin{proof}
If $\initpb=\initpc$, then trivially the unique solution to \eqref{introeq2} solution is $\initfun$, we therefore consider the case $\initpc>\initpb$ below.
We shall first prove the existence of a unique solution on $[\initpa,\initpd]_{\T}$, and then extend to $[\initpa,\sigma(\initpd)]_{\T}$.
Define the space of functions $\Omega$ with the functions satisfying the following two properties:
\begin{enumerate}[label={(\roman*)},leftmargin=*,ref=(\roman*)]
\item $x\in\crd([\initpa,\initpb]_{\T},\X)$ and $x\in\crd^{1}([\initpb,\initpd]_{\T},\X)$,
\item $x(t)=\initfun(t)$ for all $t\in[\initpa,\initpb]_{\T}$ and  $x(t)\in B(\initfun(\initpb),\varepsilon)$ for all $t\in[\initpb,\initpd]_{\T}$.
\end{enumerate}
In view of \eqref{seureq2}, define the operator $\mathcal{T}:\Omega\to\Omega$ as follows:
\begin{equation}
(\mathcal{T} x)(t):=
\begin{cases}
\initfun(\initpb)+\displaystyle\int_{\initpb}^{t}f\big(\eta,x(\dly_{1}(\eta)),x(\dly_{2}(\eta)),\ldots,x(\dly_{n}(\eta))\big)\Delta\eta\quad\text{for}\ t\in[\initpb,\initpd]_{\T}\\
\initfun(t)\quad\text{for}\ t\in[\initpa,\initpb]_{\T}.
\end{cases}\label{seurthm1prfeq1}
\end{equation}
It is clear that $\mathcal{T}\Omega\subset\Omega$, and $\Omega$ is a closed subset of the Banach space $\crd([\initpa,\initpb]_{\T},\X)\cap\crd^{1}([\initpb,\initpd]_{\T},\X)$ endowed with the supremum norm.
Define the sequence of functions $\{x_{k}\}_{k\in\N}$ by
\begin{equation}
x_{k+1}(t):=(\mathcal{T}x_{k})(t)\quad\text{for}\ t\in[\initpa,\initpd]_{\T}\ \text{and}\ k\in\N,\label{seurthm1prfeq2}
\end{equation}
where
\begin{equation}
x_{0}(t):=
\begin{cases}
\initfun(\initpb)\quad\text{for}\ t\in[\initpb,\initpd]_{\T}\\
\initfun(t)\quad\text{for}\ t\in[\initpa,\initpb]_{\T}.
\end{cases}\notag
\end{equation}
By applying induction on $k\in\N$, we will prove that
\begin{equation}
\big\|x_{k}(t)-x_{k-1}(t)\big\|\leq ML^{k-1}h_{k}(t,\initpb)\quad\text{for all}\ t\in[\initpa,\initpd]_{\T}\ \text{and all}\ k\in\N,\label{seurthm1prfeq3}
\end{equation}
where $h_{k}:\T^{2}\to\R$ is the generalized polynomial defined by
\begin{equation}
h_{k}(t,s):=
\begin{cases}
1,&k=0\\
\displaystyle\int_{s}^{t}h_{k-1}(\eta,s)\Delta\eta,&k\in\N
\end{cases}\notag
\end{equation}
for $s,t\in\T$ (see \cite[\S~1.9]{bo/pe2001}).
By the definition of the sequence $\{x_{k}\}_{k\in\N}$, \eqref{seurthm1prfeq3} holds on $[\initpa,\initpb]_{\T}$.
Hence, it remains to prove \eqref{seurthm1prfeq3} on $[\initpb,\initpd]_{\T}$.
For all $t\in[\initpb,\initpd]_{\T}$, we have
\begin{align}
\big\|x_{1}(t)-x_{0}(t)\big\|=&\bigg\|\int_{\initpb}^{t}f\big(\eta,x_{0}(\dly_{1}(\eta)),x_{0}(\dly_{2}(\eta)),\ldots,x_{0}(\dly_{n}(\eta)\big)\Delta\eta\bigg\|\notag\\
\leq&M(t-\initpb)=Mh_{1}(t,\initpb),\notag
\end{align}
which proves \eqref{seurthm1prfeq3} for $k=1$.
Suppose now that \eqref{seurthm1prfeq3} is true for some $k\in\N$, then for all $t\in[\initpb,\initpd]_{\T}$ we have
\begin{align}
\begin{split}
\big\|x_{k+1}(t)-x_{k}(t)\big\|=&\bigg\|\int_{\initpb}^{t}\Big[f\big(\eta,x_{k}(\dly_{1}(\eta)),x_{k}(\dly_{2}(\eta)),\ldots,x_{k}(\dly_{n}(\eta)\big)\\
&-f\big(\eta,x_{k-1}(\dly_{1}(\eta)),x_{k-1}(\dly_{2}(\eta)),\ldots,x_{k-1}(\dly_{n}(\eta)\big)\Big]\Delta\eta\bigg\|
\end{split}\notag\\
\leq&L\int_{\initpb}^{t}\sum_{i\in[1,n]_{\N}}\big\|x_{k}(\dly_{i}(\eta))-x_{k-1}(\dly_{i}(\eta))\big\|\Delta\eta\notag\\
\leq&ML^{k}\int_{\initpb}^{t}h_{k}(\eta,\initpb)\Delta\eta=ML^{k}h_{k+1}(t,\initpb),\notag
\end{align}
which proves that \eqref{seurthm1prfeq3} is true when $k$ is replaced with $(k+1)$.
Hence, we have just proved \eqref{seurthm1prfeq3}.
Now, we prove that the sequence
\begin{equation}
\Bigg\{x_{0}(t)+\sum_{\ell\in[0,k)_{\N_{0}}}\big[x_{\ell+1}(t)-x_{\ell}(t)\big]\Bigg\}_{k\in\N}=\{x_{k}(t)\}_{k\in\N}\quad\text{for}\ t\in[\initpb,\initpd]_{\T}\label{seurthm1prfeq6}
\end{equation}
converges uniformly.
For all $t\in[\initpb,\initpd]_{\T}$ and all $k\in\N$, we have
\begin{align}
\big\|x_{k}(t)\big\|\leq&\big\|x_{0}(t)\big\|+\sum_{\ell\in[0,k)_{\N_{0}}}\big\|x_{\ell+1}(t)-x_{\ell}(t)\big\|\notag\\
\leq&\big\|x_{0}(t)\big\|+M\sum_{\ell\in[0,k)_{\N_{0}}}L^{\ell}h_{\ell+1}(t,\initpb),\notag
\end{align}
and for the majorant series, we have
\begin{align}
\big\|x_{0}(t)\big\|+M\sum_{\ell\in\N_{0}}L^{\ell}h_{\ell+1}(t,\initpb)=&\big\|\initfun(\initpb)\big\|+\frac{M}{L}\big(\ef_{L}(t,\initpb)-1\big)\notag\\
\leq&\big\|\initfun(\initpb)\big\|+\frac{M}{L}\big(\ef_{L}(\initpd,\initpb)-1\big)\notag
\end{align}
for any $t\in[\initpb,\initpd]_{\T}$ (see \cite[Theorem~1.113, Theorem~2.35]{bo/pe2001} and \cite[Lemma~4.4]{bo/gu07}).
Recall that for all $s,t\in\T$ with $t\geq s$, we have $\ef_{L}(t,s)\geq1$ from \cite[Theorem~1.76, Theorem~2.36, Exercise~2.46, Theorem~2.48]{bo/pe2001}.
It follows from the Weierstrass $M$-test that the infinite series
\begin{equation}
x_{0}(t)+\sum_{\ell\in\N_{0}}\big[x_{\ell+1}(t)-x_{\ell}(t)\big]\quad\text{for}\ t\in[\initpa,\initpd]_{\T},\notag
\end{equation}
whose partial sum sequence is \eqref{seurthm1prfeq6}, converges uniformly.
So that the sequence of Picard iterates $\{x_{k}\}_{k\in\N}$ converges uniformly on $[\initpa,\initpd]_{\T}$, let
\begin{equation}
x(t):=\lim_{k\to\infty}x_{k}(t)\quad\text{for}\ t\in[\initpa,\initpd]_{\T}.\label{seurthm1prfeq8}
\end{equation}
For all $t\in[\initpb,\initpd]_{\T}$ and all $k\in\N_{0}$, we have
\begin{align}
\big\|&f\big(t,x_{k}(\dly_{1}(t)),x_{k}(\dly_{2}(t)),\ldots,x_{k}(\dly_{n}(t))\big)\notag\\
&-f\big(t,x_{k}(\dly_{1}(t)),x_{k}(\dly_{2}(t)),\ldots,x_{k}(\dly_{n}(t))\big)\big\|\leq L\sum_{i\in[1,n]_{\N}}\big\|x_{k}(\dly_{i}(t))-x(\dly_{i}(t))\big\|.\notag
\end{align}
This proves that the sequence of functions $\{f(\cdot,x_{k}\circ\dly_{1},x_{k}\circ\dly_{2},\ldots,x_{k}\circ\dly_{n})\}_{k\in\N_{0}}$ converges uniformly to the limit function $f(\cdot,x\circ\dly_{1},x\circ\dly_{2},\ldots,x\circ\dly_{n})$ on $[\initpb,\initpd]_{\T}$.
Letting $k\to\infty$ in \eqref{seurthm1prfeq2} and using \cite[Theorem~3.11]{gu2003}, we learn that $x$ is the fixed point of the operator $\mathcal{T}$ defined by \eqref{seurthm1prfeq1}, i.e., $x=\mathcal{T}x$ on $[\initpa,\initpd]_{\T}$.
Therefore, we have just proved that $x$ defined by \eqref{seurthm1prfeq8} solves \eqref{seureq2} or equivalently \eqref{introeq2}, i.e., solutions to \eqref{introeq2} and/or \eqref{seureq2} exist.

To show uniqueness, assume for the sake of contradiction that $x$ and $y$ are two different solutions of \eqref{introeq2} on $[\initpa,\initpd]_{\T}$.
By the definition of the initial value problem $x=y$ on $[\initpa,\initpb]_{\T}$.
Set $z(t):=\sup_{\eta\in[\initpa,t]_{\T}}\|x(\eta)-y(\eta)\|$ for $t\in[\initpa,\initpd]_{\T}$, then $z\in\crd([\initpa,\initpd]_{\T},\R)$ is nonnegative and not identically zero.
Using \eqref{seureq2} and \eqref{seurthm1eq1}, we get
\begin{equation}
z(t)\leq nL\int_{\initpb}^{t}z(\eta)\Delta\eta\quad\text{for all}\ t\in[\initpb,\initpd]_{\T},\notag
\end{equation}
which yields by applying the well-known Gr\"{o}nwall inequality (see \cite[Theorem~6.4]{bo/pe2001}) that $z$ is nonpositive on $[\initpb,\initpd]_{\T}$.
This is a contradiction and thus the solution of \eqref{introeq2} is unique on $[\initpa,\initpd]_{\T}$.
The proof is completed here if $\initpd$ is right-dense.

Now let $\initpd$ be right-scattered, and $y$ be the solution of \eqref{introeq2} which exists uniquely on $[\initpa,\initpd]_{\T}$, and define $x:[\initpa,\sigma(\initpd)]_{\T}\to\X$ by
\begin{equation}
x(t):=
\begin{cases}
y(\initpd)+\mu(\initpd)f\big(\initpd,y(\dly_{1}(\initpd)),y(\dly_{2}(\initpd)),\ldots,y(\dly_{n}(\initpd))\big)\quad\text{for}\ t=\sigma(\initpd)\\
y(t)\quad\text{for}\ t\in[\initpa,\initpd]_{\T}.
\end{cases}\notag
\end{equation}
Then, $x$ is the unique solution of \eqref{introeq2} on $[\initpa,\sigma(\initpd)]_{\T}$, and this completes the proof.
\end{proof}

Note that for \eqref{introeq3}, the conclusion of Theorem~\ref{seurthm1} coincides with that of \cite[Theorem~2.1.1]{la/si/ka1996}.

\begin{remark}\label{seurrmk1}
Although Theorem~\ref{seurthm1} is a local existence result, we may interpret the solution as a new initial function, and apply the result repeatedly to obtain the unique solution on a larger interval.
\end{remark}

\begin{remark}\label{seurrmk2}
If the time scale $\T$ consists of only isolated points then the solution exists and is unique on the entire interval $[\initpa,\initpb]_{\T}$.
To see this, one may apply Theorem~\ref{seurthm1} for a total of several times (at most number of the elements in $[\initpa,\initpb]_{\T}$).
\end{remark}

\section{More on linear equations}\label{mole}

In this section, we restrict our attention to linear equations and extend the results in the previous section to prove existence and uniqueness of solutions on the whole interval, where the equation is defined.
Later on, we give the definition of the principal solution and prove the solution representation formula in terms of the principal solution and the initial function.

\subsection{Global existence and uniqueness of linear delay dynamic equations}\label{geau}

Let $\X$ be a Banach space and $B(\X)$ be the Banach algebra on $\X$.
In $B(\X)$, we now consider the following linear delay dynamic equation:
\begin{equation}
\begin{cases}
x^{\Delta}(t)=\displaystyle\sum_{i\in[1,n]_{\N}}\cf_{i}(t)x(\dly_{i}(t))+\ftrm(t)\quad\text{for}\ t\in[\initpb,\initpc]^{\kappa}_{\T}\\
x(t)=\initfun(t)\quad\text{for}\ t\in[\initpa,\initpb]_{\T},
\end{cases}\label{seureq4}
\end{equation}
where $[1,n]_{\N}$ is a starting bounded segment of $\N$, $\initpa,\initpb,\initpc\in\T$ with $\initpc>\initpb$ and $\initpa\leq\min_{i\in[1,n]_{\N}}\inf_{t\in[\initpb,\initpc]^{\kappa}_{\T}}\{\dly_{i}(t)\}$, $\cf_{i}\in\crd([\initpb,\initpc]^{\kappa}_{\T},B(\X))$, $\ftrm\in\crd([\initpb,\initpc]^{\kappa}_{\T},\X)$, $\dly_{i}\in\crd([\initpb,\initpc]^{\kappa}_{\T},\T)$ satisfies $\dly_{i}(t)\leq t$ for all $t\in[\initpb,\initpc]_{\T}$ and all $i\in[1,n]_{\N}$ and $\initfun\in\crd([\initpa,\initpb]_{\T},\X)$.

The main result of this subsection is the following.

\begin{theorem}\label{seurcrl1}
The linear initial value problem \eqref{seureq4} admits exactly one solution on the entire interval $[\initpa,\initpc]_{\T}$.
\end{theorem}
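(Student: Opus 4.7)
The plan is to run Picard iteration directly on the whole interval $[\initpa,\initpc]_\T$, mimicking the proof of Theorem~\ref{seurthm1} but exploiting the global definition of the linear right-hand side to avoid the ball constraint that restricted that result to a small subinterval. Setting $f(t,u_1,\ldots,u_n):=\sum_{i\in[1,n]_\N}\cf_i(t)u_i+\ftrm(t)$, one has $f\in\crd([\initpb,\initpc]_\T^\kappa\times\X^n,\X)$ with no ball restriction; each $\cf_i$ is bounded on $[\initpb,\initpc]_\T^\kappa$ by rd-continuity, so $f$ is globally Lipschitz in the sense of \eqref{seurthm1eq1} with constant $L:=\max_{i\in[1,n]_\N}\sup_{t\in[\initpb,\initpc]_\T^\kappa}\|\cf_i(t)\|_{B(\X)}$, and $Q:=\sup_{t\in[\initpb,\initpc]_\T^\kappa}\|\ftrm(t)\|$ is finite as well.

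I would then define $x_0(t):=\initfun(t)$ on $[\initpa,\initpb]_\T$ and $x_0(t):=\initfun(\initpb)$ on $[\initpb,\initpc]_\T$, and iterate $x_{k+1}:=\mathcal{T}x_k$, where $\mathcal{T}$ is the integral operator \eqref{seurthm1prfeq1} with $\initpc$ in place of the local endpoint; since $f$ has no ball restriction, $\mathcal{T}$ is well-defined on all of $\crd([\initpa,\initpc]_\T,\X)$. With $\Phi:=\sup_{s\in[\initpa,\initpb]_\T}\|\initfun(s)\|$, the constant $M:=nL\Phi+Q$ dominates $\|f(\eta,x_0(\dly_1(\eta)),\ldots,x_0(\dly_n(\eta)))\|$ on $[\initpb,\initpc]_\T^\kappa$. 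By induction on $k$, using \eqref{seurthm1eq1} together with the monotonicity of $h_k(\cdot,\initpb)$ and the fact that $x_k-x_{k-1}$ vanishes on $[\initpa,\initpb]_\T$, I would then prove
\[
\|x_{k+1}(t)-x_k(t)\|\leq M(nL)^k h_{k+1}(t,\initpb)\quad\text{for all }t\in[\initpb,\initpc]_\T.
\]

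Because $\sum_{k\geq 0}(nL)^k h_{k+1}(\initpc,\initpb)=\frac{1}{nL}\big(\ef_{nL}(\initpc,\initpb)-1\big)<\infty$, the Weierstrass $M$-test yields uniform convergence of the Picard iterates on $[\initpa,\initpc]_\T$ to a limit $x$; passing to the limit in $x_{k+1}=\mathcal{T}x_k$ (legitimate because uniform convergence commutes with the $\Delta$-integral on a closed bounded subset of $\T$) identifies $x$ as a fixed point of $\mathcal{T}$, i.e.\ as a solution of \eqref{seureq4} on the entire interval $[\initpa,\initpc]_\T$. Uniqueness is verbatim the Gr\"onwall argument from Theorem~\ref{seurthm1}: any two solutions $x,y$ coincide on $[\initpa,\initpb]_\T$, and $z(t):=\sup_{\eta\in[\initpa,t]_\T}\|x(\eta)-y(\eta)\|$ satisfies $z(t)\leq nL\int_{\initpb}^{t}z(\eta)\Delta\eta$ on $[\initpb,\initpc]_\T$, which forces $z\equiv0$ by the time-scale Gr\"onwall inequality.

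I expect no serious obstacle beyond the bookkeeping in the induction; the crucial point is that linearity makes both the Lipschitz and the growth constants uniform over the entire interval $[\initpb,\initpc]_\T^\kappa$, so a single Picard sweep covers $[\initpa,\initpc]_\T$ without having to incrementally extend via Remark~\ref{seurrmk1}. An alternative but more cumbersome route would be to apply Theorem~\ref{seurthm1} repeatedly, combined with an a priori Gr\"onwall bound on $\|x\|$ to rule out the accumulation point of the extension endpoints falling strictly before $\initpc$; I would avoid this detour since the direct global iteration is cleaner.
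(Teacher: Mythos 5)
Your proof is correct, but it takes a genuinely different route from the paper's. The paper keeps Theorem~\ref{seurthm1} as a black box and runs the method of steps: it chooses a partition $\initpb_{0}<\initpb_{1}<\cdots<\initpb_{k_{0}}$ of $[\initpb,\initpc]_{\T}$ whose gaps are either single scattered jumps or of length at most $1/(2M_{1})$, and at each stage picks $\varepsilon_{k}$ so large that the local existence width $\min\{\initpb_{k+1}-\initpb_{k},\varepsilon_{k}/(M_{1}(\varepsilon_{k}+\nu_{k})+M_{2})\}$ guaranteed by Theorem~\ref{seurthm1} is bounded below by $\min\{\initpb_{k+1}-\initpb_{k},1/(2M_{1})\}$; finitely many applications then reach $\initpc$. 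You instead rerun the Picard iteration from scratch on all of $[\initpa,\initpc]_{\T}$, observing that linearity makes $f$ globally defined and globally Lipschitz so that the ball constraint (the only reason Theorem~\ref{seurthm1} is local) disappears, and the majorant $\sum_{k}(nL)^{k}h_{k+1}(\initpc,\initpb)=\frac{1}{nL}(\ef_{nL}(\initpc,\initpb)-1)$ converges on the whole interval. Your estimates check out: the delayed arguments satisfy $\dly_{i}(\eta)\leq\eta$ and $h_{k}(\cdot,\initpb)$ is nondecreasing, so the induction closes with the factor $(nL)^{k}$ you carry (indeed you are more careful with the factor $n$ than the displayed induction in the paper's proof of Theorem~\ref{seurthm1}, which records the bound as $ML^{k-1}h_{k}$ rather than $M(nL)^{k-1}h_{k}$). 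What each approach buys: yours is the cleaner, more standard argument for linear equations and avoids the partition lemma of Guseinov--Kaymak\c{c}alan entirely, at the cost of duplicating the convergence machinery of Theorem~\ref{seurthm1}; the paper's reuses the local theorem verbatim and its step-size bookkeeping is exactly the content you dismiss in your last sentence as the ``more cumbersome route'' --- correctly noting that no Gr\"{o}nwall a priori bound is actually needed there, since the paper arranges a uniform lower bound on the step width directly.
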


\begin{proof}
We shall apply the method of steps mentioned in Remark~\ref{seurrmk1} to obtain the unique solution on $[\initpa,\initpc]_{\T}$.
Pick $M_{1},M_{2}\in\R^{+}$ such that
\begin{equation}
\sup_{\eta\in[\initpb,\initpc]_{\T}}\sum_{i\in[1,n]_{\N}}\|\cf_{i}(\eta)\|\leq M_{1}\quad\text{and}\quad\sup_{\eta\in[\initpb,\initpc]_{\T}}\|\ftrm(\eta)\|\leq M_{2}\label{seurcrl1prfeq0}
\end{equation}
Recall that every rd-continuous (more truly regulated) function defined on a compact interval of $\T$ is bounded (see \cite[Theorem~1.65]{bo/pe2001}).
Define
\begin{equation}
f(t,u_{1},u_{2},\ldots,u_{n}):=\sum_{i\in[1,n]_{\N}}\cf_{i}(t)u_{i}+\ftrm(t)\notag
\end{equation}
for $t\in[\initpb,\initpc]_{\T}$ and $u_{1},u_{2},\ldots,u_{n}\in\X$.
Then, it is easy to see that the Lipschitz condition holds trivially on $[\initpb,\initpc]_{\T}\times\X^{n}$ (with the Lipschitz constant $M_{1}>0$).
For some fixed $k_{0}\in\N$, let $\initpb_{0}:=\initpb<\initpb_{1}<\cdots<\initpb_{k_{0}}:=\initpc$ be a partition of the interval $[\initpb,\initpc]_{\T}$ satisfying $\initpb_{k}=\sigma(\initpb_{k-1})$ if $\mu(\initpb_{k-1})>1/(2M_{1})$ and $|\initpb_{k}-\initpb_{k-1}|\leq1/(2M_{1})$ if $\mu(\initpb_{k-1})\leq1/(2M_{1})$ for all $k\in[1,k_{0}]_{\N}$ (see \cite[Lemma~2.6]{gu/ka2002}).
For convenience in the notation, we define $y_{0}:=\initfun$, $\initpb_{-1}:=\initpa$ and $\nu_{0}:=\sup_{\eta\in[\initpb_{-1},\initpb_{0}]_{\T}}\|y_{0}(\eta)\|$.
We may find $\varepsilon_{0}\in\R^{+}$ such that $\varepsilon_{0}/(M_{1}(\varepsilon_{0}+\nu_{0})+M_{2})\geq1/(2M_{1})$ (note that as $\varepsilon_{0}\to\infty$ the left-hand side of the inequality tends to $1/M_{1}$).
Clearly, $\|f(t,u_{1},u_{2},\ldots,u_{n})\|\leq M_{1}(\varepsilon_{0}+\nu_{0})+M_{2}$ for all $t\in[\initpb,\initpc]_{\T}^{\kappa}$ and all $u_{1},u_{2},\ldots,u_{n}\in B(0_{\X},\varepsilon_{0}+\nu_{0})$, where $0_{\X}$ is the zero vector in $\X$.
First consider the simple case $\mu(\initpb_{0})>1/(2M_{1})$, then we have $\sigma(\initpb_{0})=\initpb_{1}$.
Applying Theorem~\ref{seurthm1} to the initial value problem
\begin{equation}
\begin{cases}
x^{\Delta}(t)=\displaystyle\sum_{i\in[1,n]_{\N}}\cf_{i}(t)x(\dly_{i}(t))+\ftrm(t)\quad\text{for}\ t\in[\initpb_{0},\initpb_{1}]^{\kappa}_{\T}\\
x(t)=y_{0}(t)\quad\text{for}\ t\in[\initpb_{-1},\initpb_{0}]_{\T},
\end{cases}\label{seurcrl1prfeq1}
\end{equation}
we learn that \eqref{seurcrl1prfeq1} admits a unique solution $y_{1}$ on $[\initpb_{-1},\initpb_{1}]_{\T}$.
Next consider the case $\mu(\initpb_{0})\leq1/(2M_{1})$, then applying Theorem~\ref{seurthm1} to \eqref{seurcrl1prfeq1}, we again see that there exists a unique solution $y_{1}$ on $[\initpb_{-1},\initpb_{1}]_{\T}$.
It should be mentioned here that $I(y_{0},\varepsilon_{0})\subset B(0_{\X},\varepsilon_{0}+\nu_{0})$ and $\min\{\initpb_{1}-\initpb_{0},\varepsilon_{0}/(M_{1}(\varepsilon_{0}+\nu_{0})+M_{2})\}\geq\min\{\initpb_{1}-\initpb_{0},1/(2M_{1})\}$, and this shows that the width of the existence interval for the unique solution of \eqref{seurcrl1prfeq2} guaranteed by Theorem~\ref{seurthm1} can not be less than we have proved.
In the next step, we may pick $\varepsilon_{1}\in\R^{+}$ such that $\varepsilon_{1}/(M_{1}(\varepsilon_{1}+\nu_{1})+M_{2})\geq1/(2M_{1})$, where $\nu_{1}:=\sup_{\eta\in[\initpb_{-1},\initpb_{1}]_{\T}}\|y_{1}(\eta)\|$.
Then we have $\|f(t,u_{1},u_{2},\ldots,u_{n})\|\leq M_{1}(\varepsilon_{1}+\nu_{1})+M_{2}$ for all $t\in[\initpb,\initpc]_{\T}^{\kappa}$ and all $u_{1},u_{2},\ldots,u_{n}\in B(0_{\X},\varepsilon_{1}+\nu_{1})$.
It follows from Theorem~\ref{seurthm1} (in both of the possible cases $\mu(\initpb_{1})>1/(2M_{1})$ and $\mu(\initpb_{1})\leq1/(2M_{1})$ as shown in the first step) that the initial value problem
\begin{equation}
\begin{cases}
x^{\Delta}(t)=\displaystyle\sum_{i\in[1,n]_{\N}}\cf_{i}(t)x(\dly_{i}(t))+\ftrm(t)\quad\text{for}\ t\in[\initpb_{1},\initpb_{2}]^{\kappa}_{\T}\\
x(t)=y_{1}(t)\quad\text{for}\ t\in[\initpb_{-1},\initpb_{1}]_{\T}
\end{cases}\label{seurcrl1prfeq2}
\end{equation}
admits a unique solution $y_{2}$ on $[\initpb_{-1},\initpb_{2}]_{\T}$ since $I(y_{1},\varepsilon_{1})\subset B(0_{\X},\varepsilon_{1}+\nu_{1})$ and $\min\{\initpb_{2}-\initpb_{1},\varepsilon_{1}/(M_{1}(\varepsilon_{1}+\nu_{1})+M_{2})\}\geq\min\{\initpb_{2}-\initpb_{1},1/(2M_{1})\}$.
Repeating in this manner, we obtain the unique solution $y_{k}$ on each of the intervals $[\initpb_{-1},\initpb_{k}]_{\T}$ for all $k\in[1,k_{0}]_{\N}$.
Finally, we deduce that the unique solution $x$ of \eqref{seureq4} on the whole interval $[\initpa,\initpc]_{\T}$ is $x=y_{k_{0}}$, and the proof is therefore completed.
\end{proof}

\begin{remark}\label{seurrmk3}
The claim of Theorem~\ref{seurcrl1} remains true for \eqref{introeq2} provided that there exist constants $\cf_{1},\cf_{2},\ldots,\cf_{n},\ftrm\in\crd([\initpb,\initpc]_{\T},\R^{+})$ such that
\begin{equation}
\|f(t,u_{1},u_{2},\ldots,u_{n})\|\leq\sum_{i\in[1,n]_{\N}}\cf_{i}(t)\|u_{i}\|+\ftrm(t)\notag
\end{equation}
for all $t\in[\initpb,\initpc]_{\T}$ and all $u_{1},u_{2},\ldots,u_{n}\in\X$.
\end{remark}

\begin{remark}\label{seurrmk2}
Theorem~\ref{seurcrl1} can be used to prove existence and uniqueness of global solutions to the initial value problem \eqref{introeq1} by the method of steps.
More precisely, pick an increasing divergent sequence $\{\initpb_{k}\}_{k\in\N}\subset[\initpb,\infty)_{\T}$ such that $\initpb_{k-1}\leq\min_{i\in[1,n]_{\N}}\inf_{t\in[\initpb_{k},\infty)_{\T}}\{\dly_{i}(t)\}$ for all $k\in\N$ (with the convention $\initpb_{-1}:=\initpa$ and $\initpb_{0}:=\initpb$).
Successively for $k\in\N$, apply Theorem~\ref{seurcrl1} on $[\initpb_{k-2},\initpb_{k}]_{\T}$ by considering the unique solution obtained in the previous step as the initial function on $[\initpb_{k-2},\initpb_{k-1}]_{\T}$, and denote this solution by $y_{k}$.
Finally, we obtain the unique global solution $x$ of \eqref{introeq1} by letting $x(t)=y_{k}(t)$ for $t\in[\initpb_{k-1},\initpb_{k}]_{\T}$ and $k\in\N_{0}$.
\end{remark}

\subsection{Representation of solutions by means of the principal solution}\label{srf}

In this section, we give the definition of the principal solution and prove the representation formula for the unique solution of \eqref{seureq4} in terms of the principal solution.
We need to recall the definition of the characteristic function $\chi$.
Let $U\subset\R$, and define the so-called characteristic function $\chi_{U}:\R\to\{0,1\}$ to be
\begin{equation}
\chi_{U}(t):=
\begin{cases}
1,&t\in U\\
0,&\text{otherwise}
\end{cases}\notag
\end{equation}
for $t\in\R$.

Now we can give the definition of the principal solution to \eqref{seureq4}.

\begin{definition}[Principal solution]\label{predf1}
Let $\initpd\in[\initpb,\initpc]_{\T}$.
The solution $\prinfun=\prinfun(\cdot,\initpd):[\initpa,\initpc]_{\T}\to\X$ of the problem
\begin{equation}
\begin{cases}
x^{\Delta}(t)=\displaystyle\sum_{i\in[1,n]_{\N}}\cf_{i}(t)x(\dly_{i}(t))\quad\text{for}\ t\in[\initpd,\initpc]_{\T}^{\kappa}\\
x(t)=\chi_{\{\initpd\}}(t)1_{\X}\quad\text{for}\ t\in[\initpa,\initpd]_{\T},
\end{cases}\notag
\end{equation}
where $1_{\X}$ is the unity of the Banach algebra $B(\X)$, which satisfies $\prinfun(\cdot,\initpd)\in\crd^{1}([\initpd,\initpc]_{\T},\X)$, is called the \emph{principal solution} of \eqref{seureq4}.
\end{definition}

The following result, which plays the major role (both in the continuous and in the discrete cases)
in the qualitative theory by suggesting a representation formula
to solutions of \eqref{seureq4} by the means of
the principal solution $\prinfun$, is proven below.
This result is also known as the \textquotedblleft variation of parameters formula\textquotedblright.

\begin{theorem}[Solution representation formula]\label{prethm2}
Let $x$ be a solution of \eqref{seureq4},
then $x$ can be written in the following form:
\begin{equation}
\begin{split}
x(t)=&\prinfun(t,\initpb)\initfun(\initpb)+\int_{\initpb}^{t}\prinfun(t,\sigma(\eta))\ftrm(\eta)\Delta\eta\\
&+\int_{\initpb}^{t}\prinfun(t,\sigma(\eta))\sum_{i\in[1,n]_{\N}}\cf_{i}(\eta)\chi_{[\initpa,\initpb)_{\T}}(\dly_{i}(\eta))\initfun(\dly_{i}(\eta))\Delta\eta
\end{split}\label{prelm1eq1}
\end{equation}
for $t\in[\initpb,\initpc]_{\T}$.
\end{theorem}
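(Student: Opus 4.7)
The plan is to verify directly that the right-hand side of \eqref{prelm1eq1}, call it $y(t)$, coincides with the unique solution of \eqref{seureq4} whose existence is guaranteed by Theorem~\ref{seurcrl1}. Extend $y$ to $[\initpa,\initpb]_{\T}$ by $y(t):=\initfun(t)$; then it suffices to check that $y$ satisfies the initial value problem \eqref{seureq4}, for uniqueness will force $y\equiv x$. The initial condition is immediate on $[\initpa,\initpb)_{\T}$, and at $t=\initpb$ the two integrals in \eqref{prelm1eq1} vanish while $\prinfun(\initpb,\initpb)=\unit_{\X}$, so $y(\initpb)=\initfun(\initpb)$ and the two pieces match.

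To handle the dynamic equation I would differentiate $y$ using the time-scale Leibniz rule for parametric integrals from \cite{bo/pe2001}, namely
\begin{equation}
\Big(\int_{\initpb}^{t}F(t,\eta)\,\Delta\eta\Big)^{\Delta}=F(\sigma(t),t)+\int_{\initpb}^{t}F^{\Delta_{t}}(t,\eta)\,\Delta\eta,\notag
\end{equation}
applied with $F(t,\eta)=\prinfun(t,\sigma(\eta))\ftrm(\eta)$ and with the analogous kernel for the third summand. Each boundary term $F(\sigma(t),t)$ reduces via $\prinfun(\sigma(t),\sigma(t))=\unit_{\X}$ to the integrand evaluated at $\eta=t$, producing $\ftrm(t)+\sum_{i}\cf_{i}(t)\chi_{[\initpa,\initpb)_{\T}}(\dly_{i}(t))\initfun(\dly_{i}(t))$. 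The derivative of the first summand of \eqref{prelm1eq1} together with the interior terms $F^{\Delta_{t}}$ can then be collapsed by substituting $\prinfun^{\Delta_{t}}(t,s)=\sum_{j}\cf_{j}(t)\prinfun(\dly_{j}(t),s)$, which is the defining dynamic equation for the principal solution evaluated at $\dly_{j}(t)$, and factoring $\sum_{j}\cf_{j}(t)$ outside.

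What remains is to recognize that the resulting bracket
\begin{equation}
\prinfun(\dly_{j}(t),\initpb)\initfun(\initpb)+\int_{\initpb}^{t}\prinfun(\dly_{j}(t),\sigma(\eta))\Big[\ftrm(\eta)+\sum_{i}\cf_{i}(\eta)\chi_{[\initpa,\initpb)_{\T}}(\dly_{i}(\eta))\initfun(\dly_{i}(\eta))\Big]\Delta\eta\notag
\end{equation}
equals $y(\dly_{j}(t))$ when $\dly_{j}(t)\geq\initpb$ and equals the zero vector when $\dly_{j}(t)<\initpb$. The point is that by Definition~\ref{predf1}, $\prinfun(r,s)=\chi_{\{s\}}(r)\unit_{\X}$ whenever $r\in[\initpa,s]_{\T}$, so $\prinfun(\dly_{j}(t),\sigma(\eta))$ vanishes on $\{\eta:\sigma(\eta)>\dly_{j}(t)\}$ and the integral from $\initpb$ to $t$ truncates exactly to one from $\initpb$ to $\dly_{j}(t)$, reproducing $y(\dly_{j}(t))$. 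In the other case $\dly_{j}(t)<\initpb$, the same vanishing empties the bracket entirely, and this missing contribution is absorbed by the boundary term $\sum_{i}\cf_{i}(t)\chi_{[\initpa,\initpb)_{\T}}(\dly_{i}(t))\initfun(\dly_{i}(t))$, which by construction of the extension equals $\sum_{i}\cf_{i}(t)\chi_{[\initpa,\initpb)_{\T}}(\dly_{i}(t))y(\dly_{i}(t))$. Adding the two cases gives $y^{\Delta}(t)=\ftrm(t)+\sum_{i}\cf_{i}(t)y(\dly_{i}(t))$ on $[\initpb,\initpc]_{\T}^{\kappa}$, and Theorem~\ref{seurcrl1} concludes.

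The main obstacle I expect is the last case-analysis step: tracking how the support of the map $\eta\mapsto\prinfun(\dly_{j}(t),\sigma(\eta))$ interacts with the characteristic-function history term, and being careful at right-dense versus right-scattered instances of $\dly_{j}(t)$ so that the truncation of the outer $\Delta$-integral is genuinely exact (a right-dense $\dly_{j}(t)$ introduces a jump in the integrand at $\eta=\dly_{j}(t)$ that must be shown to contribute measure zero). Everything else is routine application of the Leibniz rule and the defining property of $\prinfun$.
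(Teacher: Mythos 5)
Your proposal is correct and follows essentially the same route as the paper's own proof: define $y$ as the right-hand side, invoke the uniqueness from Theorem~\ref{seurcrl1}, differentiate via the Leibniz rule so that the boundary terms $\prinfun(\sigma(t),\sigma(t))=\unit_{\X}$ produce $\ftrm(t)$ and the history term, substitute the defining equation of $\prinfun$, and use the vanishing of $\prinfun(\dly_{j}(t),\cdot)$ to truncate the integrals to $[\initpb,\dly_{j}(t)]_{\T}$ and recognize $y(\dly_{j}(t))$ in each case of the split over $\dly_{j}(t)\geq\initpb$ versus $\dly_{j}(t)<\initpb$ (the paper's index sets $\seg(t)$ and $\coseg(t)$). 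The truncation subtlety you flag at the end is exactly the step the paper passes over with the phrase \textquotedblleft making some arrangements\textquotedblright.
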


\begin{proof}
In the light of Theorem~\ref{seurcrl1}, it suffices to prove that
\begin{equation}
y(t):=
\begin{cases}
\begin{aligned}
&\prinfun(t,\initpb)\initfun(\initpb)+\int_{\initpb}^{t}\prinfun(t,\sigma(\eta))\ftrm(\eta)\Delta\eta\\
&+\int_{\initpb}^{t}\prinfun(t,\sigma(\eta))\sum_{i\in[1,n]_{\N}}\cf_{i}(\eta)\chi_{[\initpa,\initpb)_{\T}}(\dly_{i}(\eta))\initfun(\dly_{i}(\eta))\Delta\eta
\end{aligned},&t\in[\initpb,\initpc]_{\T}\\
\initfun(t),&t\in[\initpa,\initpb]_{\T}
\end{cases}\notag
\end{equation}
solves \eqref{seureq4}.
For $t\in[\initpb,\initpc]_{\T}$, set $\seg(t)=\{j\in[1,n]_{\N}:\ \chi_{[\initpb,\initpc]_{\T}}(\dly_{j}(t))=1\}$ and $\coseg(t):=\{j\in[1,n]_{\N}:\ \chi_{[\initpa,\initpb)_{\T}}(\dly_{j}(t))=1\}$.
Considering the definition of the principal solution $\prinfun$, we have
\begin{align}
\begin{split}
y^{\Delta}(t)=&\prinfun^{\Delta}(t,\initpb)\initfun(\initpb)+\int_{\initpb}^{t}\prinfun^{\Delta}(t,\sigma(\eta))\ftrm(\eta)\Delta\eta+\prinfun(\sigma(t),\sigma(t))\ftrm(t)\\
&+\int_{\initpb}^{t}\prinfun^{\Delta}(t,\sigma(\eta))\sum_{i\in[1,n]_{\N}}\cf_{i}(\eta)\chi_{[\initpa,\initpb)_{\T}}(\dly_{i}(\eta))\initfun(\dly_{i}(\eta))\Delta\eta\\
&+\prinfun(\sigma(t),\sigma(t))\sum_{i\in[1,n]_{\N}}\cf_{i}(t)\chi_{[\initpa,\initpb)_{\T}}(\dly_{i}(t))\initfun(\dly_{i}(t))
\end{split}\notag\\
\begin{split}
=&\sum_{j\in\seg(t)}\cf_{j}(t)\Bigg[\prinfun(\dly_{j}(t),\initpb)\initfun(\initpb)+\int_{\initpb}^{t}\prinfun(\dly_{j}(t),\sigma(\eta))\ftrm(\eta)\Delta\eta\\
&+\int_{\initpb}^{t}\prinfun(\dly_{j}(t),\sigma(\eta))\sum_{i\in[1,n]_{\N}}\cf_{i}(\eta)\chi_{[\initpa,\initpb)_{\T}}(\dly_{i}(\eta))\initfun(\dly_{i}(\eta))\big]\Delta\eta\Bigg]\\
&+\sum_{j\in\coseg(t)}\cf_{j}(t)\initfun(\dly_{j}(t))+\ftrm(t)
\end{split}\notag
\end{align}
for all $t\in[\initpb,\initpc]_{\T}^{\kappa}$.
Making some arrangements, we get
\begin{align}
\begin{split}
y^{\Delta}(t)=&\sum_{j\in\seg(t)}\cf_{j}(t)\Bigg[\prinfun(\dly_{j}(t),\initpb)\initfun(\initpb)+\int_{\initpb}^{\dly_{j}(t)}\prinfun(\dly_{j}(t),\sigma(\eta))\ftrm(\eta)\Delta\eta\\
&+\int_{\initpb}^{\dly_{j}(t)}\prinfun(\dly_{j}(t),\sigma(\eta))\sum_{i\in[1,n]_{\N}}\cf_{i}(\eta)\chi_{[\initpa,\initpb)_{\T}}(\dly_{i}(\eta))\initfun(\dly_{i}(\eta))\big]\Delta\eta\Bigg]\\
&+\sum_{j\in\coseg(t)}\cf_{j}(t)\initfun(\dly_{j}(t))+\ftrm(t)
\end{split}\notag\\
=&\sum_{j\in\seg(t)}\cf_{j}(t)y(\dly_{j}(t))+\sum_{j\in\coseg(t)}\cf_{j}(t)y(\dly_{j}(t))+\ftrm(t),\notag
\end{align}
which proves that $y$ satisfies \eqref{seureq4} for all $t\in[\initpb,\initpc]_{\T}^{\kappa}$ since $\seg(t)\cap\coseg(t)=\emptyset$ and $\seg(t)\cup\coseg(t)=[1,n]_{\N}$ for each $t\in[\initpb,\initpc]_{\T}$.
The proof is therefore completed.
\end{proof}

We conclude the paper with the following nice and simple example, which considers first-order ordinary dynamic equations.

\begin{example}\label{preex1}
Consider the following scalar first-order dynamic equation:
\begin{equation}
\begin{cases}
x^{\Delta}(t)=\cf(t)x(t)+\ftrm(t)\quad\text{for}\ t\in[\initpb,\initpc]^{\kappa}_{\T}\\
x(\initpb)=x_{0}.
\end{cases}\label{preex1eq1}
\end{equation}
The principal solution of \eqref{preex1eq1} is $\prinfun(t,s)=\chi_{[s,\initpc]_{\T}}(t)\ef_{\cf}(t,s)$ for $s,t\in[\initpb,\initpc]_{\T}$ provided that $\cf\in\reg([\initpb,\initpc]_{\T},\R)$ (see \cite[Theorem~2.71]{bo/pe2001}).
Note here that the need for the regressivity condition is just to make the exponential function meaningful.
Thus, due to Theorem~\ref{prethm2} the unique solution can be written in the form
\begin{equation}
x(t)=x_{0}\ef_{\cf}(t,\initpb)+\int_{\initpb}^{t}\ef_{\cf}(t,\sigma(\eta))\ftrm(\eta)\Delta\eta\notag
\end{equation}
for $t\in[\initpb,\initpc]_{\T}$ (see \cite[Theorem~2.77]{bo/pe2001}).
\end{example}

\end{document}